\newtheorem{thm}{Theorem}[section]
\newtheorem{lem}[thm]{Lemma}
\newtheorem{cor}[thm]{Corollary}
\newtheorem{prop}[thm]{Proposition}
\newtheorem*{conjecture*}{Conjecture}
\newtheorem*{thm*}{Theorem}
\theoremstyle{remark}
\theoremstyle{definition}
\newcommand{\Out}{\operatorname{Out}}
\newcommand{\Syl}{\operatorname{Syl}}
\newcommand{\cha}{\operatorname{char}}
\newcommand{\GL}{\operatorname{GL}}
\newcommand{\Al}{\textup{\textsf{A}}}
\newcommand{\Sy}{\textup{\textsf{S}}}
\newskip\aline \newskip\halfaline
\def\skipaline{\vskip\aline}
\def\qedbox{$\rlap{$\sqcap$}\sqcup$}
\def\qed{\nobreak\hfill\penalty250 \hbox{}\nobreak\hfill\qedbox\skipaline}
\title[On the odd order composition factors]{On the odd order composition factors of finite linear groups}
\author[A. Betz]{Alexander Betz}
\author[M. Chao-Haft]{Max Chao-Haft}
\author[T. Gong]{Ting Gong}
\author[A. Ter-Saakov]{Anthony Ter-Saakov}
\author[Y. Yang]{Yong Yang}
\address{Department of Mathematics, Le Moyne College, 1419 Salt Springs Road, Syracuse, NY 13214}
\email{betzas@lemoyne.edu}
\address{Department of Mathematics, Harvey Mudd College, 340 East Foothill Boulevard, CA 91711}
\email{mchaohaft@g.hmc.edu}
\address{Department of Mathematics, University of Notre Dame, 255 Hurley, Notre Dame, IN 46556}
\email{tgong@nd.edu}
\address{Department of Mathematics $\&$ Statistics, Boston University, 111 Cummington Mall, Boston, MA 02215}
\email{antter@bu.edu}
\address{Department of Mathematics, Texas State University, 601 University Drive, San Marcos, TX 78666}
\email{yang@txstate.edu}
\begin{document}

\maketitle

\begin{abstract}
In this paper, we study the product of orders of composition factors of odd order in a composition series of a finite linear group. First we generalize a result by Manz and Wolf about the order of solvable linear groups of odd order. Then we use this result to find bounds for the product of orders of composition factors of odd order in a composition series of a finite linear group.
\end{abstract}

\section{Introduction}
The order of a finite group is perhaps the most fundamental quantity
in group theory one can study. Accordingly, the concept of bounding
 the order of a finite group is a very natural one and has long been a
subject of vigorous research. For example, Manz and Wolf obtained
the following result \cite[Theorem 3.5]{manz/wolf} in bounding the
order of a solvable linear group by the size of the vector space on
which it acts. For the rest of this paper we let $\lambda=\sqrt[3]{24}$ and let $\alpha=(3
\cdot \log(48)+\log(24))/(3 \cdot \log(9)) \approx 2.25$.

\renewcommand{\labelenumi}{(\alph{enumi})}

\begin{thm}\label{ManzWolf}
    Let $G$ be a finite solvable group and let $V \neq 0$ be a finite, faithful, completely reducible G-module with $\cha(V)=p>0$. Then
    \begin{enumerate}
        \item $|G| \leq |V|^\alpha/ \lambda$.
        \item If $2 \nmid |G|$ or if $3 \nmid |G|$, then $|G| \leq |V|^2/\lambda$.
        \item If $2 \nmid |G|$ and $p \neq 2$, then $|G| \leq |V|^{3/2}/\lambda$.
    \end{enumerate}
\end{thm}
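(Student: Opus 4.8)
The plan is to induct on $\dim V$, reducing in two stages to the primitive case, with the constant $\lambda=\sqrt[3]{24}$ calibrated to a sharp bound on solvable permutation groups and the exponent $\alpha$ calibrated to $\GL(2,3)$ acting on $\F_3^2$. First I would reduce to $V$ irreducible: if $V=V_1\oplus V_2$ is a nontrivial $G$-decomposition, then $G$ embeds faithfully in $(G/C_G(V_1))\times(G/C_G(V_2))$, each factor acting faithfully and completely reducibly on $V_i$ (and inheriting any hypothesis on $|G|$ or on $p$). Since $|V|=|V_1|\,|V_2|$ and $\dim V_i<\dim V$, induction gives $|G|\le |V_1|^{\alpha}|V_2|^{\alpha}/\lambda^{2}\le |V|^{\alpha}/\lambda$, and identically for (b) and (c); so we may assume $V$ irreducible.

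Next I would reduce to the primitive case. If $G$ is imprimitive with block system $V=W_1\oplus\cdots\oplus W_m$, $m\ge 2$, let $N$ be the kernel of the action on blocks and $H\le\GL(W_1)$ the stabilizer's action on one block, so that $|G|\le |H|^{m}\,|G/N|$ with $|V|=|W_1|^{m}$ and $G/N\le S_m$ solvable. The external input is the sharp estimate $|G/N|\le 24^{(m-1)/3}=\lambda^{m-1}$ for solvable permutation groups (Dixon). Feeding in the inductive bound $|H|\le|W_1|^{\alpha}/\lambda$ yields $|G|\le |W_1|^{\alpha m}/\lambda^{m}\cdot\lambda^{m-1}=|V|^{\alpha}/\lambda$; this is precisely why $\lambda=\sqrt[3]{24}$ is forced. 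For (b) and (c) the same computation (with exponent $2$ or $3/2$) has slack in this step, so the reduced exponents must instead be extracted from the primitive case.

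The primitive, or rather quasiprimitive, case is the heart of the argument. Here I would use the classical structure theory of solvable quasiprimitive linear groups: via Clifford theory the Fitting subgroup acts homogeneously, producing a normal subgroup $E$ of symplectic type with $E/Z(E)\cong\F_r^{2n}$ a nondegenerate symplectic space, a cyclic scalar subgroup, and an embedding of $G/C_G(E/Z(E))$ into $\operatorname{Sp}(2n,r)$, with $\dim V$ a multiple of $r^{n}$. Bounding $|G|$ by the product of the cyclic part, the extraspecial part $r^{1+2n}$, and $|\operatorname{Sp}(2n,r)|$, and optimizing against $|V|$, the extremal balance occurs at $r=3$, $n=1$, namely $G=\GL(2,3)$ on $\F_3^{2}$ with $|\GL(2,3)|=48=9^{\alpha}/\lambda$; this pins down $\alpha$. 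Imposing $2\nmid|G|$ (or $3\nmid|G|$) removes exactly the $\GL(2,3)$-type and quaternionic contributions and lowers the attainable exponent to $2$, while further imposing $p\neq 2$ removes the characteristic-$2$ symplectic savings and lowers it to $3/2$.

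The main obstacle is this primitive base case: assembling the precise structure theorem, controlling the small configurations where $|E|$ and $|\operatorname{Sp}(2n,r)|$ are comparable to $|V|$, and verifying by explicit computation that the optimum of the product of orders is attained exactly at the stated extremal examples, so that $\alpha$ and $\lambda$ are simultaneously sharp. A secondary difficulty is the bookkeeping of the field of definition (distinguishing $\F_p$ from the endomorphism field of the module), which directly affects the exponents, together with checking in the odd and odd-characteristic cases that the excluded primes really do eliminate every configuration that would otherwise force a larger exponent.
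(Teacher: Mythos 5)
First, a framing point: the paper does not actually prove this statement. Theorem~\ref{ManzWolf} is quoted from \cite[Theorem 3.5]{manz/wolf}, and the paper's own work (Proposition~\ref{newManzWolf}) only adapts the steps of that cited proof to Hall subgroups. So the comparison target is Manz--Wolf's original argument, and your outline does reconstruct its architecture faithfully: reduce to $V$ irreducible, then to the quasiprimitive case via Clifford theory, handle the imprimitive step by the wreath-product estimate with Dixon's bound $24^{(m-1)/3}$ for solvable permutation groups (a legitimate external input, proved independently of this theorem, so no circularity), and calibrate the constants exactly as you say: $9^{\alpha}/\lambda = 48 = |\GL(2,3)|$, and $\lambda^{m-1}$ is precisely what makes the induced-module step close up. Your two reduction steps, including the inheritance of the hypotheses of (b) and (c) by the block group and by the quotients $G/C_G(V_i)$, are sound as written.

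The genuine gap is that the quasiprimitive case --- which is essentially the entire content of the theorem --- is not proved but only described; you yourself label it ``the main obstacle.'' Naming the symplectic-type structure theorem and the extremal example $\GL(2,3)$ on $\F_3^2$ does not substitute for the case analysis that constitutes the actual proof: one must bound the factors in $|G| = |G/A|\,|A/F|\,|F/T|\,|T|$ (with $T$ cyclic, $|F/T|=e^2$, $A/F$ inside a symplectic group, and $e$ a product over possibly several primes, not a single $r^n$), dispose of the semilinear case $G \le \Gamma(p^n)$ separately, and then grind through the small configurations ($n \le 3$, $|V| < 16$, $e = 2,3,4$, characteristic $2$), which is where the inequality is actually tight. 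Moreover, for parts (b) and (c) your argument reduces to the assertion that excluding the prime $2$ (or $3$) ``removes exactly the $\GL(2,3)$-type and quaternionic contributions and lowers the attainable exponent'' --- this is stated, not proved, and it is exactly the part that requires fresh estimates: the paper's Proposition~\ref{newManzWolf} exists precisely because these odd-order/odd-characteristic cases demand their own computations (the $e=2,3,4$ and $n=2,3$ cases there). As it stands, the proposal establishes the standard reductions but leaves the base case, and hence the theorem, unproven.
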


\renewcommand{\labelenumi}{(\arabic{enumi})}

In light of this result, it is natural to ask whether one can extend
(b) and (c) to a similar result for the order of a subgroup $H$ of a
completely reducible linear group $G$ (note that $H$ needs not to be completely reducible on $V$).

It should be pointed out that several recent advancements have improved the
previous theorem. For instance, Guralnick et al. \cite{GMP17} found
a bound for the product of abelian  composition factors of a
primitive permutation group, and Halasi and Mar\'{o}ti \cite{HM16}
generalized part (a) of the above theorem to
$p$-solvable groups.

Inspired by the above results and a sequence of papers written by
the fifth author \cite{KY18,QY19}, we consider the
product of the orders of certain abelian composition factors. By
combining the techniques used in \cite{KY18} and \cite{manz/wolf},
we obtain an upper bound for the product of the orders of the odd
order (abelian) composition factors of an arbitrary linear group,
which generalizes part of Theorem~\ref{ManzWolf} to an arbitrary
finite linear group.

We define $a(G)$ to be the product of orders of composition factors of odd order in a composition series of a finite group $G$. By the Jordan-H\"{o}lder's theorem, we see that this quantity is independent of the choice of the composition series.

Our main result is the following.

\begin{thm}\label{main1}
Let $G$ be a finite group acting on $V$ faithfully and completely reducibly where $V$ is of characteristic $p$. Then the following hold.
\begin{enumerate}
    \item $a(G)\leq |V|^2/\lambda$.
    \item If $p\neq 2$, then $a(G)\leq |V|^{3/2}/\lambda$.
\end{enumerate}
\end{thm}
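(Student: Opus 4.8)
The plan is to prove both parts simultaneously by induction on $\dim V$, with $|G|$ as a secondary parameter, and to front-load the solvable case. Since every finite simple group of odd order is cyclic of prime order (Feit--Thompson), all composition factors counted by $a(G)$ are abelian; in particular, if $R$ is solvable then $a(R)$ is exactly the odd part $|R|_{2'}$ of $|R|$. My first step would therefore be to establish the ``odd-order'' refinement of Theorem~\ref{ManzWolf} announced in the abstract: if $R$ is solvable and acts faithfully and completely reducibly on $V$, then $a(R)=|R|_{2'}\le |V|^2/\lambda$, and $\le |V|^{3/2}/\lambda$ when $p\neq 2$. This sits strictly between parts (b), (c) of Theorem~\ref{ManzWolf} (which bound $|R|$ itself under a global parity or divisibility hypothesis) and the statement I need, which only discards the $2$-part. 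I would prove it by following the Manz--Wolf induction while tracking the $2$-part separately: reduce to $R$ primitive on $V$, invoke the standard description of a primitive solvable linear group as normalizing a symplectic-type normal subgroup acting on a tensor factor, and bound the odd part of each layer. I expect the bookkeeping of the $2$-part through the symplectic-type reduction to be the delicate point there.

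With the solvable case in hand, the structural reductions are clean because $a$ is exactly multiplicative on normal subgroups: for $N\trianglelefteq G$ one has $a(G)=a(N)\,a(G/N)$, which follows from the Jordan--H\"older theorem by refining a composition series through $N$. First I would reduce to $V$ irreducible. If $V=U\oplus W$ nontrivially with both summands $G$-invariant, let $N$ be the kernel of the action of $G$ on $U$; then $G/N$ embeds in $\GL(U)$ and $N\trianglelefteq G$ acts trivially on $U$. By Clifford's theorem $V$ restricts to a completely reducible $N$-module, so $N$ acts faithfully and completely reducibly on $W$. Applying the inductive hypothesis to $G/N$ on $U$ and to $N$ on $W$ gives
\[
a(G)=a(N)\,a(G/N)\le \frac{|W|^2}{\lambda}\cdot\frac{|U|^2}{\lambda}=\frac{|V|^2}{\lambda^2}\le\frac{|V|^2}{\lambda},
\]
and the analogous computation with exponent $3/2$ when $p\neq 2$; the spare factor $1/\lambda$ is exactly what makes the induction close.

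Next I would reduce to $V$ primitive. If $G$ is irreducible but imprimitive, fix a system of imprimitivity $V=V_1\oplus\cdots\oplus V_n$ with $n>1$ and let $K$ be the kernel of the induced transitive action $G\to \Sy_n$, so $\bar G:=G/K\le \Sy_n$. Then $K\trianglelefteq G$ stabilizes each $V_i$, acts faithfully and (by Clifford) completely reducibly on each, and iterating the previous paragraph yields $a(K)\le\prod_{i=1}^n |V_i|^2/\lambda=|V|^2/\lambda^n$ (resp.\ with exponent $3/2$). For the permutation quotient I would use a bound of the form $a(\bar G)\le \lambda^{\,n-1}=24^{(n-1)/3}$, valid for every subgroup of $\Sy_n$; this is the odd-composition-factor analogue of Dixon's theorem on solvable subgroups of $\Sy_n$, and the identity $\lambda=\sqrt[3]{24}$ is precisely what makes it match. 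Combining, $a(G)=a(K)\,a(\bar G)\le (|V|^2/\lambda^n)\,\lambda^{n-1}=|V|^2/\lambda$, and identically for part (b).

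The remaining, and genuinely hard, case is $G$ primitive on $V$. Here I would invoke the structure theorem for primitive linear groups: $G$ normalizes its generalized Fitting subgroup $F^*(G)=F(G)E(G)$, and $V$ decomposes as a tensor product $V\cong V_0\otimes V_1$ in which a symplectic-type normal subgroup (carrying the action of $F(G)$ together with the solvable radical) acts on $V_0$ while the central product of the quasisimple components $E(G)$ acts on $V_1$. Writing $R=\operatorname{Sol}(G)$, we have $a(G)=a(R)\,a(G/R)$, and the factors of $G/\operatorname{Sol}(G)$ counted by $a$ are only the abelian odd factors coming from the permutation of isomorphic components and from the outer automorphism groups of the components, since the components themselves contribute non-abelian simple factors of even order. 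The difficulty is that these two contributions cannot be bounded independently: the solvable radical alone can already saturate the exponent $2$, leaving no room. The resolution, and the crux of the whole argument, is that dimension is multiplicative under the tensor decomposition, $\dim V=\dim V_0\cdot\dim V_1$, so a component consuming a tensor factor of dimension $d_1=\dim V_1>1$ inflates $|V|$ to $|V_0|^{d_1}$, whereas its odd outer-automorphism and Schur-multiplier contributions grow only polynomially in $d_1$. I would therefore bound the $V_0$-part by the solvable odd-order theorem applied on $V_0$, and the $V_1$-part using the Landazuri--Seitz lower bounds on the minimal faithful projective degrees of quasisimple groups together with the standard estimates on $|\Out(S)|$ and on Schur multipliers; the multiplicativity of the dimension then forces the product to stay within $|V|^2/\lambda$ (resp.\ $|V|^{3/2}/\lambda$ when $p\neq 2$). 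I expect this joint estimate in the primitive case---reconciling the size of the solvable radical against the dimensional cost of the components---to be the main obstacle of the proof.
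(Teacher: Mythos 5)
Your Clifford-theoretic reductions (to irreducible, then primitive $V$) and your solvable-case statement are sound in outline, but the proposal has a genuine gap exactly where you place the weight of the argument, and the plan for closing it would fail as designed. The primitive case is only a sketch, and it rests on a structural conflation: you correctly note that the odd factors of $G$ above $R=\operatorname{Sol}(G)$ come from the outer automorphism groups of the components of $G/R$ and from the permutation of those components, but you then identify those components with the layer $E(G)$ of $G$ itself, acting on a tensor factor $V_1$ of $V$. These two things differ precisely in the symplectic-type case $F^*(G)=F(G)$, $E(G)=1$: for instance $\mathrm{PSL}(2,8){:}3$ is a maximal subgroup of $\mathrm{Sp}(6,2)$, so there are primitive linear groups $G=(4\circ 2^{1+6}).(\mathrm{PSL}(2,8){:}3)\le \GL(8,q)$ with trivial layer in which $G/R$ nevertheless has a component $\mathrm{PSL}(2,8)$ contributing an odd composition factor $C_3$ through its outer automorphism group. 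In such a configuration your $V_1$ is one-dimensional, the tensor decomposition carries no information, and the Landazuri--Seitz bounds give no purchase on that component (it acts on $F(G)/Z(F(G))$, not projectively on any tensor factor of $V$); meanwhile your estimate for $a(R)$ is the full solvable bound $|V|^2/\lambda$, so multiplying by the factor $3$ already breaks the chain of inequalities. So the ``joint estimate'' you defer is not merely laborious; in this case it cannot be run as described. A secondary, fillable gap: the bound $a(\bar G)\le\lambda^{\,n-1}$ for arbitrary $\bar G\le \Sy_n$ is not an off-the-shelf analogue of Dixon's theorem (the odd-order composition factors of a non-solvable permutation group are not contained in any solvable subgroup); the paper has to prove the stronger bound $2^{n-1}$ as Proposition~\ref{oddperm}, by an intransitive/imprimitive induction together with Mar\'oti's CFSG-based classification of primitive groups.

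For contrast, the paper's proof avoids the structure theory of primitive linear groups entirely, and its mechanism is exactly what your plan is missing. With $S$ the solvable radical, $\phi:G\to G/S$ the quotient map, and $E_1,\dots,E_m$ the components of $G/S$ (genuinely simple, of even order), Lemma~\ref{simpleodd} supplies solvable subgroups $H_i\le E_i$ with $(|H_i|,p)=1$ and $|H_i|_{2'}\ge 2|\Out(E_i)|_{2'}$. Setting $H=H_1\times\dots\times H_m$, one gets $|H|_{2'}\ge 2^m|\Out(E(G/S))|_{2'}$, which dominates the product of the $\Out$-contribution and the permutation contribution $2^{m-1}$ from Proposition~\ref{oddperm}; hence $a(G)\le |S|_{2'}\,|H|_{2'}=|\phi^{-1}(H)|_{2'}$. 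Since $(|H|,p)=1$, the solvable group $\phi^{-1}(H)$ still acts completely reducibly on $V$ by the coprime Maschke argument, and the subgroup version of Manz--Wolf (Proposition~\ref{newManzWolf}) finishes both parts. In other words, rather than hunting for spare room inside $|V|$ (tensor factors plus Landazuri--Seitz), the paper manufactures the room inside the solvable bound itself, by enlarging the solvable subgroup until it dominates every odd factor above $S$ --- a device that works uniformly, including in the extraspecial case where your tensor factor disappears.
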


The paper is organized as follows. In Section 2, we prove a slight generalization of \cite[Theorem 3.5(b)(c)]{manz/wolf} which includes the solvable case of Theorem ~\ref{main1}. In Section 3, we prove some properties of simple groups that are needed to reduce the general case to solvable groups. In Section 4, we prove a related result about permutation groups and then prove the main theorem of the paper.

We will use the following notation for the remainder of the paper.
All groups in this paper are assumed to be finite. Given a group
$G$, we use $F(G)$ to denote the Fitting subgroup of $G$, and use
$F^*(G)$ to denote the generalized Fitting subgroup of $G$. The
layer of $G$ is denoted as $E(G)$, and $\Out(G)$ is the outer
automorphism group of $G$. In addition, for a prime $p$, we denote
the order of Hall $p'$-subgroups of $G$ by $|G|_{p'}$.



\section{The Solvable Case}
In this section, we generalize parts (b) and (c) of \cite[Theorem 3.5]{manz/wolf} to a subgroup $H$ of $G$ that satisfies the respective conditions. We note that the action of $H$ on $V$ need not be completely reducible, and thus the generalization is not trivial.

\begin{prop}\label{newManzWolf}
     Let $G$ be a finite solvable group and let $V \neq 0$ be a finite, faithful, completely reducible G-module with $\cha(V)=p>0$. Let $H$ be a subgroup of $G$.
    \begin{enumerate}
        \item If $2 \nmid |H|$ or if $3 \nmid |H|$, then $|H| \leq |V|^2/\lambda$.
        \item If $2 \nmid |H|$ and $p \neq 2$, then $|H| \leq |V|^{3/2}/\lambda$.
    \end{enumerate}
\end{prop}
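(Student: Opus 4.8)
The plan is to use Theorem~\ref{ManzWolf} as the engine and to reduce the general subgroup action to a completely reducible one by passing to a semisimplification. Let $V = V^{(0)} \supsetneq V^{(1)} \supsetneq \cdots \supsetneq V^{(r)} = 0$ be an $H$-composition series and set $V^{\mathrm{ss}} = \bigoplus_{i=1}^{r} V^{(i-1)}/V^{(i)}$, a completely reducible $H$-module with $\dim V^{\mathrm{ss}} = \dim V$, so $|V^{\mathrm{ss}}| = |V|$. The kernel of the $H$-action on $V^{\mathrm{ss}}$ consists of the elements acting trivially on every composition factor; these are unipotent, form a normal $p$-subgroup, and are exactly $O_p(H)$ (the reverse inclusion is Clifford's theorem applied to the normal $p$-subgroup $O_p(H)$). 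Hence $H/O_p(H)$ is solvable, acts faithfully and completely reducibly on $V^{\mathrm{ss}}$, and inherits the order hypotheses on $H$. Thus Theorem~\ref{ManzWolf}(b),(c) gives $|H/O_p(H)| \le |V|^{2}/\lambda$ in case (a) and $|H/O_p(H)| \le |V|^{3/2}/\lambda$ in case (b). In particular, if $O_p(H) = 1$ — which includes the case where $H$ itself acts completely reducibly on $V$ — we are already done.

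It remains to treat $P := O_p(H) \ne 1$, and here I would induct on $\dim V$. First I dispose of the case that $V$ is reducible as a $G$-module: writing $V = V_1 \oplus V_2$ with the $V_i$ nonzero $G$-submodules, each $V_i$ is also an $H$-submodule, and faithfulness yields an embedding $H \hookrightarrow H_1 \times H_2$ into the images $H_i \le \GL(V_i)$ of $H$. Each $H_i$ lies in the image $G_i$ of $G$, which is solvable and acts faithfully and completely reducibly on $V_i$, and the order hypotheses descend to the quotients $H_i$. Since $\dim V_i < \dim V$, the inductive hypothesis bounds each $|H_i|$, and multiplying gives $|H| \le |H_1|\,|H_2| \le |V_1|^{2}|V_2|^{2}/\lambda^{2} = |V|^{2}/\lambda^{2} \le |V|^{2}/\lambda$ in case (a), with the analogous estimate in case (b); the surplus factor $1/\lambda$ leaves room to spare. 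So I may assume $V$ is an irreducible $G$-module.

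The remaining, and genuinely hard, situation is $V$ irreducible for $G$ together with $P = O_p(H) \ne 1$. The semisimplification above controls $|H/P|$ but discards the factor $|P|$, and a crude product $|H| = |P|\,|H/P|$ overshoots the target exponents. The leverage I must use is that $P$ is a nontrivial $p$-subgroup of the completely reducible solvable group $G$, for which $O_p(G) = 1$ and therefore $F(G) = O_{p'}(G)$ with $C_G(F(G)) \le F(G)$; this pins down the possible $p$-subgroups of $G$ and so bounds $|P|$ in terms of $|V|$. Simultaneously, since $P \trianglelefteq H$ the fixed space $C_V(P)$ is a nonzero proper $H$-submodule (nonzero because $P$ is a $p$-group in characteristic $p$, proper because $H$ is faithful), and analysing the $H$-action on the filtration determined by $C_V(P)$ and by the socle series should let me bound $|P|$ and the complementary factor $|H/P|$ together rather than by a lossy product.

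The main obstacle is precisely this simultaneous estimate in the $G$-irreducible, $O_p(H) \ne 1$ case: I expect to weigh $|P|$ and $|H/P|$ against complementary parts of $|V|$ so that their product stays below $|V|^{2}/\lambda$ (respectively $|V|^{3/2}/\lambda$), using the structural restrictions on $p$-subgroups of $G$ and, where needed, the finer Manz--Wolf analysis of primitive solvable linear groups now applied to $H$ rather than to $G$. Making this combined estimate respect the exponents $2$ and $3/2$ is the crux; once it is in place, the reductions above assemble into both parts of the proposition.
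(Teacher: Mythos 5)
Your preparatory reductions are correct as far as they go: the kernel of the $H$-action on $V^{\mathrm{ss}}$ is indeed exactly $O_p(H)$, the quotient $H/O_p(H)$ does satisfy the hypotheses of Theorem~\ref{ManzWolf}(b),(c) on $V^{\mathrm{ss}}$, and the induction splitting off a $G$-decomposition $V = V_1 \oplus V_2$ works. But the proof has a genuine gap exactly where you flag it: the case $V$ irreducible as a $G$-module with $P = O_p(H) \neq 1$ is never proved, only described as ``the crux.'' Moreover, your own setup shows this case cannot be closed with the tools you have assembled: applying Theorem~\ref{ManzWolf} to $H/P$ on $V^{\mathrm{ss}}$ yields $|H/P| \leq |V^{\mathrm{ss}}|^2/\lambda = |V|^2/\lambda$ (resp.\ $|V|^{3/2}/\lambda$), which already consumes the entire target bound and leaves no room whatsoever for the factor $|P|$. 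Any completion along your lines would therefore have to abandon the black-box use of Theorem~\ref{ManzWolf} and reopen its internal analysis, so what you have written is a reduction to an unsolved problem, not a proof.

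The idea you are missing is much simpler, and it is the first line of the paper's proof: since $G$ is \emph{solvable}, Hall's theorem puts every subgroup of odd order (resp.\ of order coprime to $3$) inside a Hall $2'$-subgroup (resp.\ Hall $3'$-subgroup) of $G$. Hence it suffices to bound $|G|_{2'}$ and $|G|_{3'}$, which are invariants of $G$ itself --- and $G$ \emph{does} act faithfully and completely reducibly, so the entire difficulty you wrestle with ($H$ not completely reducible, $O_p(H) \neq 1$) evaporates. The paper then reruns the five steps of the proof of \cite[Theorem 3.5]{manz/wolf} tracking the $2'$-part (or $3'$-part) of $|G|$ instead of $|G|$: Steps 1--3 (irreducibility, quasi-primitivity, $|V| = p^n$ with $n \geq 2$ and $p^n \geq 16$) go through verbatim, and only Step 4 (the case $G \leq \Gamma(p^n)$ and small $n$) and Step 5 (small values of $e$) require new numerical estimates for the Hall-subgroup order. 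You use solvability of $G$ only to invoke Theorem~\ref{ManzWolf}; using it via Hall's theorem is what makes the statement about arbitrary subgroups $H$ tractable.
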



\begin{proof}  Since $G$ is solvable, we only need to consider the Hall $2'$-subgroup or the Hall $3'$-subgroup of $G$. The proof follows the arguments in \cite[Theorem 3.5]{manz/wolf} with some slight adjustments in each of the steps. For consistency we will adopt the notation used in \cite[Theorem 3.5]{manz/wolf}. Step 1 shows that $V$ is irreducible and the argument here is unchanged. Step 2 shows that $V$ is quasi-primitive and the argument is the unchanged as well. Step 3 shows that if we set $|V|=p^n$, then we may assume that $n \geq 2$ and $p^n \geq 16$. The calculation remains the same.

In Step 4, we show that $G \not\leq \Gamma(p^n)$, $n > 3$, and if $p=2$, then $n \geq 8$. All the arguments are the same with the exception of proving $n > 3$ for statement (2).
Assume $n=2$, we note that $e=2$, $2 \mid p-1$, and $p \geq 5$. We have
 \[|G|_{2'}\leq 1/2\cdot(p-1)\cdot 3\leq p^{3}/3\leq |V|^{3/2}/\lambda.\]
When $n=3$, we have $e=3$, $p \geq 7$, and thus $|V| \geq p^3$. Thus $|G| = |T||F/T||G/F| \mid (p-1)\cdot 9 \cdot 24$. We observe that $|G|_{2'}\leq \frac {p-1} 2 \cdot 27 \leq p^{4.5}/\lambda \leq  |V|^{3/2}/\lambda$.

In Step 5, by examining the proof of \cite[Theorem 3.5]{manz/wolf} carefully, we only need to check a few cases when $e$ is small for case (2).

\begin{enumerate}
    \item If $e=2$, then $|G|$ is divisible by $8$ and $A/F\leq \GL(2,2)$. Thus $|A/F|_{2'} \leq 3$. Since $|V|\geq 81$, we have
    \[ |G|_{2'} = (|G/A||A/F||F/T||T|)_{2'} \leq 3|U|^2 \leq 3|V| \leq \frac{|V|^{3/2}}{3}. \]

    \item If $e = 3$, then $|A/F|\leq \GL(2,3)$ and $p \geq 4$. Thus $|A/F|_{2'} \leq 3$. Since $|V|\geq 256$, we have
    \[ |G|_{2'}= (|G/A||A/F||F/T||T|)_{2'} \leq 27 \cdot |U|^2 \leq 27 \cdot  |V|^{2/3} \leq \frac{|V|^{3/2}}{3}. \]

    \item If $e=4$, we note that $|A/F|_{2'} \leq 15$ and $|V|\geq 81$. Thus we have
    $$|G|_{2'} \leq (|G/A||A/F||F/T||T|)_{2'} \leq 15 \cdot |U|^2\leq 15 \cdot |V|^{1/2} \leq \frac{|V|^{3/2}}{3}.$$
\end{enumerate}
This completes the proof.
\end{proof}


\section{Properties of Simple Groups}
The following property about the odd order subgroups of simple
groups is needed for the reduction of the main theorem to the
solvable case, which also has some applications to the study of
quantitative aspects of orbit structure of linear groups.

The general outline of the following proof is, for most finite simple groups of Lie type, we use results related to Zsigmondy primes to find two prime divisors $L_1$ and $L_2$ of $|G|$, such that there exist subgroups $H_1$, $H_2$ with $H_i = \Syl_{L_i}(G)$ satisfying the conditions required. There are some exceptional cases when either the rank or the size of the finite field is small. In these cases one cannot find suitable Zsigmondy primes. We handle these exceptional cases by checking the bounds via direct calculation.

\noindent

\begin{lem}\label{simpleodd}
Let $G$ be a finite non-abelian simple group and $r$ be a fixed prime. Then there exists a solvable subgroup $H$ of $G$ such that $(|H|,r) = 1$ and $|H|_{2'}\geq 2|\Out(G)|_{2'}$.
\end{lem}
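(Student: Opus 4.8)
The plan is to prove the lemma by invoking the classification of finite simple groups and handling the three families---alternating groups, sporadic groups, and groups of Lie type---separately. The key reduction I would make at the outset is the following observation: if $L$ is any odd prime dividing $|G|$, then a Sylow $L$-subgroup $P=\Syl_L(G)$ is an $L$-group, hence nilpotent and in particular solvable, and since $L$ is odd its odd part equals its full order, so $|P|_{2'}=|P|\geq L$. Consequently it suffices to exhibit \emph{two distinct} odd primes $L_1\neq L_2$ dividing $|G|$ with $L_i\geq 2|\Out(G)|_{2'}$: having two such primes lets me discard whichever one (if any) equals the fixed prime $r$ and take $H=\Syl_{L_i}(G)$ for the surviving index, which is then solvable, satisfies $(|H|,r)=1$ because $H$ is an $L_i$-group, and has $|H|_{2'}\geq L_i\geq 2|\Out(G)|_{2'}$.

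For alternating and sporadic groups this is immediate. In both families $|\Out(G)|$ is a power of $2$ (order $2$, with order $4$ in the single exception $A_6$, and order $1$ or $2$ for each sporadic group), so $|\Out(G)|_{2'}=1$ and I only need two distinct odd primes dividing $|G|$, each at least $2$. Every non-abelian simple group of these types has order divisible by at least two odd primes (for instance $3$ and $5$), so any two odd prime divisors other than $r$ supply the required Sylow subgroups.

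The substantive case is $G$ of Lie type over a field of order $q=p^{f}$. Here $|\Out(G)|_{2'}$ is controlled by the diagonal, field, and graph automorphisms, and grows only polynomially in the rank and in $f$. The order $|G|$ is a power of $p$ times a product of cyclotomic factors $q^{d}-1$; for $d$ near the Coxeter number a Zsigmondy (primitive prime) divisor $\ell$ of $q^{d}-1$ exists, is odd, and is large, since $\ell\equiv 1\pmod d$ forces $\ell\geq d+1$, and in fact $\ell$ exceeds $2|\Out(G)|_{2'}$ once the rank and $q$ are not too small. I would choose two distinct degrees $d_{1},d_{2}$ admitting primitive prime divisors $L_{1},L_{2}$, verify $L_{i}\geq 2|\Out(G)|_{2'}$ from the standard lower bounds on primitive prime divisors, and then conclude exactly as above by selecting the prime different from $r$.

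The main obstacle is precisely this Lie-type analysis. One must choose the two degrees $d_{1},d_{2}$ uniformly across all the infinite families, bound $|\Out(G)|_{2'}$ against the size of the primitive prime divisors, and then enumerate and verify by direct computation the finitely many exceptional pairs $(G,q)$ of small rank or small field where Zsigmondy's theorem provides no suitable primitive prime divisor. It is exactly in this finite list of exceptions that the clean Zsigmondy argument breaks down, and there the inequality $|H|_{2'}\geq 2|\Out(G)|_{2'}$ must be confirmed by inspecting the known orders and outer automorphism groups case by case.
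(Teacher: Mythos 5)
Your proposal follows essentially the same route as the paper: reduce via the classification, observe that for alternating and sporadic groups $|\Out(G)|_{2'}=1$ so any two odd prime divisors suffice, and for groups of Lie type produce two distinct large Zsigmondy primes (so that one of them can always dodge the fixed prime $r$) whose Sylow subgroups serve as $H$, with the finitely many small-rank or small-field exceptions verified by direct computation --- exactly the paper's strategy, down to its tables of exceptional cases. The one refinement the paper uses that your stated criterion $L_i \geq 2|\Out(G)|_{2'}$ omits is the fallback when a Zsigmondy prime $L$ is itself too small but $L^2$ divides the relevant cyclotomic factor, in which case the full Sylow $L$-subgroup (of order at least $L^2$) still meets the bound; your framework of taking Sylow subgroups accommodates this automatically, so it is a minor gap in formulation rather than in substance.
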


\begin{proof}
 We now go through the Classification of Finite Simple Groups.
   \noindent
    \begin{enumerate}
        \item Let $G$ be one of the alternating groups $\Al_n$, $n\geq 5$. It's well-known that $|\Out(\Al_n)| = 2$ except when $n=6$ and $|\Out(\Al_6)| = 4$. Thus $|\Out(\Al_n)|_{2'} = 1$ . Since $5 \mid |\Al_n|$ and $3 \mid |\Al_n|$, the result follows.
        \vspace{0.1in}

        \item Let $G$ be one of the sporadic or the Tits groups. Then $|\Out(G)| \leq 2$ and the result can be confirmed by \cite{Conway}.

   For simple group of Lie type, we go through various families of Lie type. To illustrate the method, \cite[Proposition 4.1]{KY18} gives a detailed analysis for $\rm{A}$$_n(q)$ and shows how to handle most cases. For those finitely many exceptional cases, we will check that the required inequalities hold by direct calculation. Since these arguments are similar, for the remaining families of simple groups of Lie type, there is a table in \cite[Proposition 4.1]{KY18} that handles all the exceptional cases.
        \vspace{0.1in}
         \item Let $G=\rm{A}$$_1(q)$ where $q=p^f$. We have $$|G| = q(q+1)(q-1)d^{-1}$$ where $d= (2,q-1)$ and $|\Out(G)| = df$.

        Case (a). Suppose that $q$ is even. Then $d=1$ and $|\Out(G)| = f$.

        Assume there exists Zsigmondy prime $L_1$ for $p^{2f}-1$. Then  $L_1\mid p^{2f}-1$ and thus $L_1\mid p^f+1$ and $L_1\geq 2f = 2|\Out(G)| \geq 2|\Out(G)|_{2'}$.

        Assume there exists a Zsigmondy prime $L_2\mid p^f-1$, where $L_2\geq f$. It is clear that $L_1\neq L_2$. If $L_2 \geq 2f$, then we are done. Otherwise if $L_2^2 \mid p^f-1$, we consider the Sylow $L_2$-subgroup $L$. Then $|L|\geq 2f$. However, we have the following exceptions by  \cite[Lemma 3.1]{KY18}:

                \vspace{0.1in}
        (i) $f=4$, thus $|\Out(G)| = 4$, and $|\Out(G)|_{2'} = 1$. Since $2^4 + 1 = 17$ and $2^4-1 = 15 = 3\cdot 5$, we may choose $L_1 = 17$ and $L_2 = 5$.

        (ii) $f=6$, thus $|\Out(G)| = 6$, and $|\Out(G)|_{2'} = 3$. Since $2^6 + 1 = 65 = 5\cdot 17$ and $2^6-1 = 63 = 7\cdot 3^2$, we may choose $L_1 = 13$ and $L_2 = 7$.

        (iii) $f=12$, thus $|\Out(G)| = 12$, and $|\Out(G)|_{2'} = 3$. Since $2^{12}+1 = 4097 = 17 \cdot 241$ and $2^{12}-1 = 4095 = 3^2 \cdot 5 \cdot 7 \cdot 13$, we may choose $L_1 = 17$ and $L_2 = 7$.
                \vspace{0.1in}

        Case (b). Suppose that $q$ is odd. Then $d=2$ and $|\Out(G)| = 2f$, implying that $|\Out(G)|_{2'} = f$. We apply the same idea as before, $L_1 \mid p^f+1$ and $L_1\geq 2f = 2|\Out(G)|_{2'}$.

       There exists an $L_2\mid p^f-1$, where $L_2\geq f$, and $L_1\neq L_2$. If $L_2 \geq 2f$, then we are done. Otherwise if $L_2^2 \mid p^f-1$, we consider the Sylow $L_2$-subgroup $L$. Then $|L|\geq 2f$. The following case is the exception by \cite[Lemma 3.1]{KY18}:
               \vspace{0.1in}

       (i) When $p=3$, $f=4$, thus $|\Out(G)|_{2'} = 1$. Since $3^4+1 = 82 = 2\cdot 41$ and $3^4-1=80 = 2^4\cdot 5$, we may choose $L_1 = 41$ and $L_2 = 5$.
         \vspace{0.1in}

       \item Let $G=\rm{A}$$_n(q)$, where $q=p^f$ and $n\geq 2$. Set $m = \prod^n_{i=1}(q^{i+1}-1)$. Then $|G| = d^{-1}q^{n(n+1)/2}m$, $|\Out(G)| = 2fd$, where $d = (n+1,q-1)$.

       With the exception of a finite number of cases, there exists a Zsigmondy prime $L_1$ for $p^{f(n+1)} - 1$ such that $L_1\geq 2f(n+1)$, or $L_1^2 \mid p^{f(n+1)} - 1$. It follows that $L_1^2\geq 2f(n+1)$. Let $H_1$ be a Sylow $L_1$-subgroup $G$. Also, by \cite[Lemma 3.2]{KY18}, with the exception of a finite number of cases, there exists a Zsigmondy prime $L_2$ for $p^{fn} - 1$ such that $L_2\geq 3fn\geq 2f(n+1)$, or $L_2^2 \mid p^{fn} - 1$. This implies that $L_2^2\geq 3fn\geq 2f(n+1)$. Let $H_2$ be a Sylow $L_2$-subgroup of $G$. Notice that $L_1\neq L_2$.

       Since $|\Out(G)| = 2fd$, $|\Out(G)|_{2'} \leq fd$. Also, $n+1\geq d = (n+1, q-1)$, and $|H_1|, |H_2|\geq 2|\Out(G)|_{2'}$. Therefore, the result follows. The exceptions are listed in Table 1 (by \cite[Lemma 3.2]{KY18}).

\skipaline{}

    \begin{small}

    \begin{longtable}{|c|c|c|c|c|c|c|}    \hline
    $p$ & $n$ & $f$ & $d$ & $|\Out(G)|_{2'}$ & $|H_1|$ & $|H_2|$ \\
    \hline
    2  & 3,4,6,8,12,20  & 1  & 1  &   1       &  divides $2^{(n+1)}-1$    &  divides $2^n-1$    \\ \hline
    2 & 2  & 2  & 3  &  3   &   9   &  7    \\ \hline
    2 & 2  &  3 & 1  &   3   &   73   &  7    \\ \hline
    2 & 3  & 2  & 1  &    1 &   7   &  17    \\ \hline
    2 & 2  & 4  & 3  &   3   &  17    &  13    \\ \hline
    2 & 4  & 2  & 1  &   1  &   31   &   17   \\ \hline
    2 & 2  & 6  & 3  &  9   &   73   &   19   \\ \hline
    2 & 3  & 4  &1   &   1       &  257    & 17     \\ \hline
   2  & 4  & 3  & 1  &    3      &  151    &  31    \\ \hline
    2 &  6 & 2  & 1  &    1      &  127    &   43   \\ \hline
    2 & 2  & 10  & 3  &    15      &  331    &  151    \\ \hline
    2 &  4 & 5  & 1  &     5     &   31   & 11     \\ \hline
    2 & 5  & 4  & 1  &    1      &  31    &  11    \\ \hline
    2 & 10  & 2  & 1  &   1       &  31    &   11   \\ \hline
    3 & 2  & 2  & 1  &    1      &  7    &  5    \\ \hline
    3 & 2  & 3  & 1  &  3        & 13     & 7     \\ \hline
    3 & 3  & 2 & 4  &  1        & 41     & 13     \\ \hline
    \end{longtable}
     \setcounter{table}{0}
    \captionof{table}{Exceptional cases for $\rm{A}$$_n(q)$} \label{tab:title}
        \end{small}

\item Let $G=$ $\rm{^2A}$$_n(q^2)$ where $n\geq 2$. Note that if $n=2$, then $q>2$. Set $m=\prod^n_{i=1}(q^{i+1} - (-1)^{i+1})$, $q^2 = p^f$ and $d=(n+1,q+1)$. Then $|G| = d^{-1}mq^{n(n+1)/2}$, and $|\Out(G)| = df$. By  \cite[Theorem A]{WF88} and \cite[Lemma 3.2]{KY18}, there exists a Zsigmondy prime $L_1 \mid p^{f(n+1)/2}-(-1)^{n+1}$ such that $L_1\geq 2(n+1)f\geq 2df$ or $L_1^2 \mid p^{f(n+1)/2}-(-1)^{n+1}$ and $L_1^2\geq 2(n+1)f\geq 2df$. Moreover, by \cite[Theorem A]{WF88} and \cite[Lemma 3.2]{KY18}, with the exception of a finite number of cases, there exists a Zsigmondy prime $L_2\mid p^{fn/2}-(-1)^{n+1}$ such that $L_2\geq \frac 5 2 (n+1)f\geq 2(n+1)f  \geq 2df$ or $L_2^2\mid p^{fn/2}-(-1)^{n+1}$ and $L_2^2\geq \frac 5 2 (n+1)f\geq 2(n+1)f  \geq 2df$. For all the exceptional cases, we can check the results via direct calculation and by considering the order of the group $G$ and $\Out(G)$ (see Table 2).

    \begin{small}
\begin{longtable}{|c|c|c|c|c|c|c|} \hline
    $p$ & $n$ & $f/2$ & $d$ & $|\Out(G)|_{2'}$ & $|H_1|$ & $|H_2|$ \\ \hline
    2& 3  & 1 &  1 &    1      &  9    &    5  \\ \hline
    2& 2  & 2 &  1 &     1     &13     &    5  \\ \hline
    2& 4  & 1 &   1&    1      &  13    &   5   \\ \hline
    2& 2  &3  &  3 &   9       &  243    &   19   \\ \hline
    2& 3  &2  &1   &   1       &    13  &     5 \\ \hline
    2& 6  & 1 & 1  &  1        &   43   &   7  \\ \hline
    2& 2  & 4 &  1 &     1     &   241   &   17   \\ \hline
    2& 4  &2  & 5  &     5     &    41  &  25    \\ \hline
    2& 8  &1  &3   &  3        &    19  &   17   \\ \hline
    2&  2 & 5 & 3  &  15        &    331  &   31   \\ \hline
    2&  5 & 2 &  1 &   1       &   7   &     5 \\ \hline
    2&  10 & 1 & 1  &    1      &    31  &  11    \\ \hline
    2& 2  &6  & 1 &     3     &   37   &    13  \\ \hline
    2& 3  &4  &1   &     1     &  241    &    17  \\ \hline
    2&4   &  3&  1 &        3  &  13    &      11\\ \hline
    2&  6 & 2 &  1 &    1      &    7  &    5  \\ \hline
    2&  12 &1  &1   &  1        &   7   &    5  \\ \hline
    2&   2& 9 & 3  &     27     &   87211   &  73    \\ \hline
    2 &  3 & 6 & 1  &    3      &  37    &   13   \\ \hline
   2 &  6 & 3 & 1  &   3       &   19   &    7  \\ \hline
    2& 9  & 2 &  5 &   5       &    41  &    31  \\ \hline
    2& 18  &1 &  1 &    1      &   19   &   7   \\ \hline
    2&  2 & 10 &  1 &   5       &   61   &  41    \\ \hline
    2&  4 & 5 &  1 &    5      &   31   &   11   \\ \hline
    2&  5 & 4 & 1  &     1     &  13    &  9    \\ \hline
     2&  10 & 2 & 1  &   1       &  31    & 11     \\ \hline
    2&  20 &1  & 3  &     3     &   41   &   31   \\ \hline
    2& 2  & 14 & 1  &    7      &   127   &   43   \\ \hline
    2& 4  & 7  & 1  &    7      &   71   &   43   \\ \hline
    2&7   & 4 & 1  &    1      &   257   &  17    \\ \hline
    2&14   & 2 & 5  &    5      &  41    &   17   \\ \hline
    2& 28  & 1  & 1  &    1      &  59   &  17    \\ \hline
    3& 3  & 1 & 4  &    1      &  5   &   13   \\ \hline
    3& 2  & 2 &  1 &     1     & 73     &    5  \\ \hline
    3& 4  & 1 &   1&    1      &  61    &   5   \\ \hline
    3& 2  &3  &  1 &   3       &  13    &   7   \\ \hline
    3& 3  &2  &2   &   1       &    73  &     41 \\ \hline
    3& 6  & 1 & 1  &  1        &   13   &   7  \\ \hline
    5& 2  & 2&  1 &      1    &    601  &   13   \\ \hline
    5& 4  & 1 & 1  &    1      &  3   &   313   \\ \hline
 \end{longtable}
 \setcounter{table}{1}
 \captionof{table}{Exceptional cases for $\rm{^2A}$$_n(q^2)$} \label{tab:title}
   \end{small}
\end{enumerate}

    We now provide a table (Table 3) for the simple groups of Lie types other than $\rm{A}$$_n(q)$ and $\rm{^2A}$$(q^2)$. The second and the third columns in the table are two large prime divisors that correspond to Sylow subgroups.

\begin{tiny}
\begin{flushleft}
\begin{longtable}{ |c|c|c|c|} \hline
type & $L{_1}$ & $L{_2}$ &exceptional cases \\\hline
   $B_n(q),q=p^f$  &  $p^{f(2n)}-1$  & $p^{f(2n-2)}-1$  &                  $(f=1,n=2); (p=2, f=3, n=2); (p=2, f=1, n=3)$        \\\hline
   $C_n(q),q=p^f$ &  $p^{f(2n)}-1$  & $p^{f(2n-2)}-1$ & $(f=1,n=2)$;$(p=2,f=3,n=2)$; $(p=2,f=1,n=3)$                         \\\hline
   $D_n(q),q=p^f$  &  $p^{f(2n-2)}-1$ & $p^{f(2n-4)}-1$  & $(p=2,f=1,n=5)$;$(p=2,f=2,n=5)$;$(p=2,f=1,n=3)$;                      \\
    & & &  $(p=2,f=1,n=4)$; $(p=2,f=3,n=4)$;\\
     & & &$(p=3,f=1,n=4)$;$(p=5,f=1,n=4)$ \\\hline
   $^2D_n(q^2),q^2=p^f$  & $p^{f(n-1)}-1$  & $p^{f(n-2)}-1$ &    $(p=3,f=2,n=3, 4)$;$(p=5,f=2,n=4)$;                 \\\hline
   $E_6(q),q=p^f$  &   $p^{12f}-1$ & $p^{8f}-1$  &                         \\ \hline
    $E_7(q),q=p^f$ &  $p^{18f}-1$  & $p^{14f}-1$ &                         \\\hline
    $E_8(q),q=p^f$  &   $p^{30f}-1$ & $p^{24f}-1$  &                         \\ \hline
    $F_4(q),q=p^f$ &   $p^{12f}-1$ & $p^{8f}-1$  &                         \\ \hline
    $G_2(q),q=p^f$ &   $p^{6f}-1$  & $p^{2f}-1$&  $(p=2, f=1,2,3)$ \\\hline
    $^2E_6(q),q^2=p^f$ & $p^{6f}-1$   & $p^{4f}-1$ &                         \\ \hline
    $^3D_4(q^3),q^3=p^f$ & $p^{4f}-1$ & $p^{2f}-1$  &                     $(p=2, f=3,5,7)$    \\\hline
   $^2B_2(2^{2n+1})$ & $2^{4(2n+1)}-1$  & $2^{(2n+1)}-1$  &                         \\ \hline
    $^2F_4(2^{2n+1})$ & $2^{4(2n+1)}-1$ &  $2^{(2n+1)}-1$  &                         \\\hline
    $^2G_2(3^{2n+1})$ & $3^{3(2n+1)}+1$  & $3^{(2n+1)}-1$  &                         \\ \hline
\end{longtable}
 \setcounter{table}{2}
 \captionof{table}{Other Lie Type Groups} \label{tab:title}
\end{flushleft}
\end{tiny}
\end{proof}

\section{Composition factors of odd order}

In this section we prove the main result of the paper. Before doing so, we need the following proposition about permutation groups.

\begin{prop}\label{oddperm}
Let $G$ be a group of permutations on a set $\Omega$ of order $n$. Then $a(G)$ is at most $2^{n-1}$.
\end{prop}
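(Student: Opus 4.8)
The plan is to induct on $n=|\Omega|$, using throughout that $a$ is multiplicative across normal subgroups: if $N\trianglelefteq G$ then the composition factors of $G$ are those of $N$ together with those of $G/N$, so $a(G)=a(N)\,a(G/N)$ by Jordan--H\"older. I would also record at the outset that every composition factor of odd order is, by the Feit--Thompson theorem, cyclic of odd prime order; hence $a(G)$ is always \emph{odd} and counts only abelian factors. The base cases $n\le 4$ are finite and checked by hand (for instance $a(\sym(3))=3\le 4$), so I may assume $n\ge 5$ and that the bound holds in all smaller degrees.

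First I would dispose of the \emph{intransitive} case. Write $\Omega=\Omega_1\sqcup\Omega_2$ with both parts $G$-invariant and $1\le n_i=|\Omega_i|<n$. Restriction to $\Omega_1$ gives a homomorphism $G\to\sym(\Omega_1)$ with image $G_1$ and kernel $K$; since $G$ is faithful on $\Omega$, the kernel $K$ acts faithfully on $\Omega_2$ and so is a permutation group of degree $n_2$. Thus $a(G)=a(K)\,a(G_1)\le 2^{n_2-1}2^{n_1-1}=2^{n-2}$ by the inductive hypothesis. Iterating this over the parts also settles the \emph{imprimitive transitive} case: given a nontrivial block system with $k$ blocks of size $b$ (so $n=kb$), let $\bar G\le\sym(k)$ be the action on blocks and $K$ its kernel. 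The kernel $K$ fixes each block setwise, hence is intransitive with the $k$ blocks as invariant sets, and the intransitive estimate gives $a(K)\le\prod_{i=1}^k 2^{b-1}=2^{n-k}$; combined with $a(\bar G)\le 2^{k-1}$ this yields $a(G)=a(K)\,a(\bar G)\le 2^{n-1}$.

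The remaining, and genuinely hard, case is when $G$ is \emph{primitive}. Here I would invoke the O'Nan--Scott description through a minimal normal subgroup $N=T^\ell$. If $T$ is nonabelian simple then $T$ has even order, so $a(N)=a(T)^\ell=1$ and $a(G)=a(G/N)$, where $G/N$ embeds in $\Out(T)\wr\sym(\ell)$; Lemma~\ref{simpleodd}, together with the known smallness of $|\Out(T)|$, controls the odd factors coming from the outer part, and since the degree $n$ grows rapidly relative to $\ell$ and $|\Out(T)|$ the bound $2^{n-1}$ follows with room to spare. If instead $T=\Z/p$, then $G$ is of affine type: $G=V\rtimes G_0$ with $V=\F_p^d$, $n=p^d$, and $G_0\le\GL(d,p)$ acting irreducibly, hence completely reducibly, on $V$. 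Then $a(G)=a(V)\,a(G_0)$, with $a(V)=p^d$ when $p$ is odd and $a(V)=1$ when $p=2$. To estimate $a(G_0)$ I would run the method of Sections 2 and 3: decompose $V$ into homogeneous components $V_1\oplus\cdots\oplus V_t$, on which $G_0$ acts as a permutation group of degree $t\le d<p^d=n$, bound the odd factors of this component action by the \emph{inductive hypothesis} of Proposition~\ref{oddperm} in the smaller degree $t$, bound the solvable stabilizer contributions by Theorem~\ref{ManzWolf} and Proposition~\ref{newManzWolf}, and bound the nonabelian simple factors by Lemma~\ref{simpleodd}, obtaining $a(G_0)\le |V|^2/\lambda$. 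Because every appeal to Proposition~\ref{oddperm} here is in a degree $t<n$, this routing avoids any circular use of the main theorem.

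The main obstacle is exactly this affine primitive case. The resulting estimate $a(G)\le a(V)\,a(G_0)\le p^{3d}/\lambda$ must be compared with $2^{p^d-1}$: since the target is exponential in $n=p^d$ while $p^{3d}$ is only exponential in $d=\log_p n$, the inequality holds with enormous room for all but the smallest pairs $(p,d)$, yet the crude linear estimate is too weak precisely when $p^d$ is small. I would therefore isolate a finite list of small pairs $(p,d)$, together with a finite list of small-degree non-affine primitive groups, and verify $a(G)\le 2^{n-1}$ for those by direct computation, exploiting that $a(G)$ is odd to sharpen the numerics. Organizing these finitely many exceptional verifications, and confirming that the component-action induction is genuinely in strictly smaller degree, is where the real care will be required.
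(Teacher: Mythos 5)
Your intransitive and imprimitive reductions coincide with the paper's (same kernel-and-quotient bookkeeping, same $2^{n-2}$ and $2^{n-k}\cdot 2^{k-1}$ estimates), so the divergence is entirely in the primitive case --- and that is where your proposal has genuine gaps rather than a proof. The paper dispatches primitivity in a few lines by citing Mar\'oti \cite[Corollary 1.4]{M02}: a primitive group of degree $n$ either contains $\Al_n$, in which case $a(G)=1$ for $n\geq 5$ since $\Al_n$ is nonabelian simple of even order and $|G:\Al_n|\leq 2$, or lies in an explicit finite list, which the paper checks against $2^{n-1}$ in a table. You instead go through O'Nan--Scott. In your nonabelian-socle case, the assertion that the bound ``follows with room to spare'' is not an argument: what is actually needed is a uniform inequality of the shape $|\Out(T)|_{2'}^{\ell}\,(\ell!)_{2'}\leq 2^{n-1}$ with $n\geq m(T)^{\ell}$, where $m(T)$ is the minimal faithful permutation degree of $T$. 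This is true, but it rests on CFSG-based comparisons of $|\Out(T)|$ with $m(T)$ that neither your sketch nor Lemma~\ref{simpleodd} provides (that lemma bounds $|\Out(T)|_{2'}$ by orders of solvable subgroups of $T$, which has no direct connection to permutation degrees).

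The affine case is the more serious problem. There you need $a(G_0)\leq |V|^2/\lambda$ for an irreducible $G_0\leq \GL(d,p)$, which is precisely Theorem~\ref{main1}(1) --- and the paper proves Theorem~\ref{main1} \emph{from} Proposition~\ref{oddperm}, so you are proposing a mutual induction. Your claim that this is non-circular because Proposition~\ref{oddperm} is only invoked ``in degree $t\leq d$'' (homogeneous components of $V$) misidentifies where the proposition enters: in the proof of Theorem~\ref{main1}, the permutation group to which it is applied is $K=\bar G/\bar C$ acting on the $m$ simple components of $E(\bar G)$, $\bar G=G_0/S$, so well-foundedness requires a separate (if elementary) bound showing $m<p^d$, and the entire reduction (maximal solvable normal subgroup, layer, Lemma~\ref{simpleodd}, Proposition~\ref{newManzWolf}) would have to be re-run inside your induction rather than cited. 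Moreover, even granting $a(G_0)\leq |V|^2/\lambda$, your closing numerical step fails exactly where you suspect: for odd $p$ one needs $p^{3d}/\lambda\leq 2^{p^d-1}$, which is false for $p^d\in\{5,7\}$ (and tight at $p^d=9$), so the ``finite list of small pairs'' is not an optional safety net but a required part of the proof, and it is not carried out. In short, you have replaced the step the paper handles by a single citation with the hardest and least complete part of your argument.
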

 \begin{proof}
 We first check that the result is true for $n \leq 4$ $(|\Sy_2|_{2'}\leq 2, |\Sy_3|_{2'}\leq 4,|\Sy_4|_{2'}\leq 8)$. We may assume that $n \geq 5$. We shall proceed by induction on $|G|$.

We first suppose that $G$ is intransitive on $\Omega$. Write $\Omega= \Gamma_1\cup\Gamma_2$ for nonempty subsets $\Gamma_i$ of
$\Omega$ such that $G$ permutes $\Gamma_1$ and also $\Gamma_2$. Write $n_i=|\Gamma_i|$ for $i=1,2$; so clearly $n_1+n_2=n$. Let $L$ be the kernel of $G$ on $\Gamma_1$, so that $L$ acts faithfully on $\Gamma_2$. By induction we then have $a(G/L) \leq {2}^{n_1-1}$ and $a(L) \leq {2}^{n_2-1}$, and so $a(G) \leq  {2}^{n_1-1}  \cdot {2}^{n_2-1} = {2}^{n-2} < {2}^{n-1}$, as desired. So now we may assume that $G$ is transitive on $\Omega$.

Suppose that $G$ is imprimitive on $\Omega$. Then there is a nontrivial decomposition $\Omega = \bigcup_i \Omega_i$ with $G$ permuting
$X = \{\Omega_1, \dots , \Omega_r \}$ and $N_G(\Omega_1)$ acting primitively on $\Omega_1$ where $|\Omega_1|=m$ and $n=m r$.
Let $\pi$ be the permutation representation of $G$ on $X$ and $K = \ker \pi$. Set $K_0=K$ and $K_{i+1} =\{g \in K_i \ |\ g$ acts trivially on $\Omega_{i+1} \}$.
We note that $a(G) = a_0 a_1 \dots a_r$, where $a_0 =a(G/K)$ and $a_i = a(K_{i-1}/K_i)$ for $i \geq 1$. By induction, $a_0 \leq 2^{r-1}$ and $a_i \leq 2^{m-1}$ for $i \geq 1$. It is easy to see that $a(G) \leq 2^{n-1}$.

Thus we may assume $G$ is primitive, then we know that $G$ either contains $\Al_n$ or is one of the groups in the exceptional list by \cite[Corollary 1.4]{M02}. If $G$ contains $\Al_n$, then the result is clear since $n\geq 5$. Otherwise, $G$ is one of the groups in the exceptional list, we verify the result in Table 4.

    \begin{small}
     \begin{center}
    \begin{longtable}{|c|c|c|c|}\hline
         $G$ & $|G|_{2'}$ & $n$ & $2^{n-1}$ \\\hline
         ${\rm{AGL}(1,5)}$& 5  & 5 & 16   \\\hline
          $\rm{AGL}(3,2)$& 21 & 8  &128    \\\hline
          $\rm{AGL}(2,3)$& 27  &9  &256   \\\hline
         $\rm{AGL}(4,2)$&315  &16  & 32768   \\\hline
          $\rm{A\Gamma L}(1,8)$& 21  &8  &128   \\\hline
         $2^4:\Al_7$ & 315 &16  &  32768  \\\hline
         $\rm{PSL}(2,5)$ & 15& 6&32 \\\hline
         $\rm{PSL}(3,2)$ & 21 &7&64\\\hline
         $\rm{PSL}(2,7)$&21&8&128\\\hline
         $\rm{PSL}(3,3)$&351 &13 &4096 \\\hline
         $\rm{PSL}(4,2)$&315 &15 &16384\\\hline
         $\rm{PGL}(2,5)$&15 &6 &32\\\hline
         $\rm{PGL}(2,7)$&21 & 8 & 128\\\hline
         $\rm{PGL}(2,9)$&45 &10 & 512\\\hline
         $\rm{P\Gamma L}(2,8)$&189 & 9&256 \\\hline
         $\rm{P\Gamma L}(2,9)$& 45& 10 &512 \\\hline
         $M_{10}$&45 & 10 & 512 \\\hline
         $M_{11}$& 495& 11 &1024  \\\hline
         $M_{11}$&495 &12&2048 \\\hline
         $M_{12}$&1485 &12 &2048  \\\hline
         $M_{23}$&79695 &23 &4194304  \\\hline
         $M_{24}$&239085 &24 &8388608  \\\hline
         $S_6$ &45 &  10&512 \\\hline

   \end{longtable}
 \setcounter{table}{3}
     \captionof{table}{List of exceptional primitive groups not containing $\Al_n$ } \label{tab:title}
         \end{center}
     \end{small}
 \end{proof}
With this result and the work done in the previous sections, we now can prove the main result.


\noindent {\emph{Proof of Theorem ~\ref{main1}.}}  Let $S$ be the
maximal normal solvable subgroup of $G$. Consider $\bar{G} = G/S$.
It is easy to see that $F(\bar{G}) = 1$. Therefore, $F^*(\bar{G}) =
F(\bar{G})E(\bar{G}) = E(\bar{G})$. Also, we know that
$Z(E(\bar{G}))$ is trivial, otherwise $S$ is not maximal. Since
$E(\bar{G})/Z(E(\bar{G}))$ is the unique largest semi-simple
subgroup of $\bar{G}$, $E(\bar{G})$ is the product of  simple
non-abelian subgroups.

 Let $\bar{C} = C_{\bar{G}}(E(\bar{G}))$. Since $F^*(G)$ is self-centralizing, $\bar{C}<F^*(G)$.
 Let $K = \bar{G}/\bar{C}$. Then $K$ acts faithfully on $E(\bar{G})$.
 We may assume that $K$ acts transitively on $$L_1 = E_{11}\times\dots \times E_{1k_1},$$
where $E_{11}, \ldots, E_{1k_1}$ are non-abelian simple components
of $E(\bar{G})$. Let $K_1 = C_{K}(L_1)$. We may assume that $K_1$
acts transitively on $$L_2 = E_{21}\times\dots\times E_{2k_2},$$
where $E_{21}, \ldots, E_{2k_2}$ are non-abelian simple components
of $E(\bar{G})$. Let $K_2 = C_{K_1}(L_2)$,
 inductively, we may define $L_3, K_3, \ldots, L_t, K_t$. Then $E(\bar{G}) = L_1\times\dots\times L_t$, and $K_{i-1}/K_{i}$ acts transitively on $L_i$.

Since $G$ acts on $V$ completely reducibly, and $S$ is a normal
subgroup of $G$, we know that $S$ acts on $V$ completely reducibly.
Since $E_1, \ldots, E_m$ are non-abelian simple subgroups, by Lemma
~\ref{simpleodd} there exists a solvable subgroup $H = H_1\times
\dots \times H_m$, where $H_i< E_i$ such that $(H_i, p) = 1$, and
$|H_i|_{2'} \geq 2|\Out(E_i)|_{2'}$. Therefore, $|H|_{2'} =
\prod_i|H_i|_{2'}\geq 2^m|\Out(E(\bar{G}))|_{2'}$.

Moreover, $K$ is a permutation group permuting
$E_1,\ldots,E_m$. By Proposition ~\ref{oddperm}, the
product of the orders of all the odd order composition factors of
$K$ is less than $2^{m-1}$. Thus, $|H|_{2'}\geq
2^{m}|\Out(E(\bar{G}))|_{2'}$,  which is greater than the product of
the orders of all the odd order composition factors of $K$ and the $2'$ part of the outer automorphism of $E(\bar{G})$.

Let $\phi: G\rightarrow G/S$ to be the canonical homomorphism. Since $(|H|,p) = 1$, we know that $\phi^{-1}(H)$ acts on $V$ completely reducibly by the generalized Maschke's Theorem (cf. \cite[Problem 1.8]{IS94}). Therefore, by Proposition ~\ref{newManzWolf}, we have $|\phi^{-1}(H)|_{2'}\leq |V|^2/\lambda$, and if $p\neq 2, |\phi^{-1}(H)|_{2'}\leq |V|^{3/2}/\lambda$.

We observe that the odd order composition factors of $G$ are
distributed in the maximal normal solvable subgroup $S$, the outer
automorphism of the direct product of simple groups $E(\bar{G})$,
and the odd order composition factors of the permutation group
$\bar{G}/\bar{C}$. Therefore, $a(G) \leq
|\phi^{-1}(H)|_{2'}$, and the result follows.\qed

From this result we derive the following corollary.

\begin{cor}\label{main2}
Let $G$ be a finite group acting on $V$ faithfully and completely reducibly ($V$ is possibly of mixed characteristic). Then $a(G)\leq |V|^2/\lambda$.
\end{cor}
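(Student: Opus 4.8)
The plan is to reduce the mixed characteristic case to the equal characteristic case already established in Theorem~\ref{main1}. Since $V$ is a faithful completely reducible module of possibly mixed characteristic, I would first decompose $V$ as a direct sum of its homogeneous (or isotypic) pieces according to the characteristics appearing. More precisely, write $V = V_1 \oplus \cdots \oplus V_s$ where each $V_j$ is the sum of all irreducible constituents lying in a fixed characteristic $p_j$ (with the $p_j$ distinct primes), so that each $V_j$ is a completely reducible module in the single characteristic $p_j$. The group $G$ permutes these homogeneous components, and since they correspond to distinct characteristics, $G$ in fact stabilizes each $V_j$; thus there is a natural homomorphism $G \to \prod_j G_j$, where $G_j$ denotes the image of $G$ acting on $V_j$.

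Next I would analyze how $a(G)$ behaves under this embedding. The key point is that $G$ embeds into $\prod_j G_j$ (the action on $V = \bigoplus V_j$ is determined by the actions on the summands, and faithfulness of $G$ on $V$ guarantees the map $G \to \prod_j G_j$ is injective). Each $G_j$ acts faithfully and completely reducibly on $V_j$ in the single characteristic $p_j$, so part~(a) of Theorem~\ref{main1} applies to give $a(G_j) \leq |V_j|^2/\lambda$. Since $a$ is multiplicative along subnormal series and behaves well under the containment $G \hookrightarrow \prod_j G_j$, one obtains $a(G) \leq \prod_j a(G_j)$.

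Combining these, I would estimate
\[
a(G) \leq \prod_j a(G_j) \leq \prod_j \frac{|V_j|^2}{\lambda} = \frac{1}{\lambda^s}\prod_j |V_j|^2 = \frac{|V|^2}{\lambda^s} \leq \frac{|V|^2}{\lambda},
\]
using $|V| = \prod_j |V_j|$ and $\lambda = \sqrt[3]{24} > 1$, so that $\lambda^s \geq \lambda$ for $s \geq 1$. This yields the desired bound.

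The main obstacle I anticipate is justifying the inequality $a(G) \leq \prod_j a(G_j)$ rigorously. The decomposition argument requires care: I need to confirm that $G$ genuinely stabilizes each homogeneous component $V_j$ (this uses that summands in different characteristics cannot be intertwined, since an irreducible module in characteristic $p_j$ cannot be isomorphic to one in characteristic $p_k$ for $k \neq j$), and then verify that the odd-order composition factors of $G$ are controlled by those of the images $G_j$. The cleanest way to handle the multiplicativity is to observe that $G$ sits inside the direct product $\prod_j G_j$ as a subdirect product, and that $a$ of a group divides $a$ of any overgroup into which it embeds subnormally, or more directly that a composition series of $G$ refines to interact with the projections; since each projection $G \to G_j$ has image $G_j$, the odd-order composition factors of $G$ appear among those of the $G_j$ counted with multiplicity, giving $a(G) \leq \prod_j a(G_j)$. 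Once this bookkeeping is in place, the numerical estimate above closes the argument.
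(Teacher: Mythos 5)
Your proposal is correct and is essentially the paper's own (implicit) derivation: the paper states Corollary~\ref{main2} without proof as an immediate consequence of Theorem~\ref{main1}(1), obtained exactly as you do by splitting $V$ into its characteristic-$p_j$ parts $V_j$ (each is the $p_j$-torsion subgroup of $V$, hence $G$-invariant and completely reducible), applying the theorem to each faithful quotient $G_j = G/N_j$, and multiplying, with $\lambda^s \geq \lambda$ absorbing the slack. One correction to your hedged justification of $a(G)\leq \prod_j a(G_j)$: the ``embeds subnormally'' route does not work as stated, since a subdirect product need not be subnormal in the direct product (e.g.\ the diagonal copy of a nonabelian simple group $S$ in $S\times S$); instead use your second route, made precise by the normal series $G \geq N_1 \geq N_1\cap N_2 \geq \cdots \geq N_1\cap\cdots\cap N_s = 1$, where $N_j$ is the kernel of $G$ on $V_j$: each factor $(N_1\cap\cdots\cap N_{j-1})/(N_1\cap\cdots\cap N_j)$ is isomorphic to a \emph{normal} subgroup of $G_j$, so its $a$-value is at most $a(G_j)$ by the multiplicativity of $a$ over normal series (Jordan--H\"older), which yields the claimed inequality.
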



\section*{Acknowledgement}

This research was conducted under NSF-REU grant DMS-1757233 by the
first, second, third, and forth authors during
the Summer of 2019 under the supervision of the fifth author.
The authors gratefully acknowledge the financial support of NSF and also
thank Texas State University for providing a great working environment
and support.

\end{document}